\newtheorem{thm}{Theorem}
\newtheorem{cor}[thm]{Corollary}
\newtheorem{lem}[thm]{Lemma}
\newtheorem{prop}[thm]{Proposition}
\newtheorem{conj}{Conjecture}
\theoremstyle{definition}
\theoremstyle{remark}
\numberwithin{equation}{section}
\newtheorem{exam}{Example}
\newcommand{\subs}{\subseteq}
\newcommand{\alphabet}{\frac{\alpha}{\beta}}
\newcommand{\Kot}{Koteljanskii}
\newcommand{\cK}{\mathcal K}
\newcommand{\cA}{\mathcal A}
\newcommand{\cB}{\mathcal B}
\newcommand{\cD}{\mathcal D}
\newcommand{\cE}{\mathcal E}
\newcommand{\cH}{\mathcal H}
\newcommand{\nt}{\operatorname{nul}}
\newcommand{\rt}{\rho}
\newcommand{\asn}{\operatorname{asn}}
\newcommand{\RR}{\mathbb R}
\newcommand{\RRR}{\RR^{(2^N)}}
\newcommand{\allones}{e}
\newcommand{\expon}{\eta}
\newcommand{\set}[1]{\left\{#1\right\}}
\newcommand{\eps}{\varepsilon}
\newcommand{\lmin}{\lambda_{\rm min}}
\newcommand{\lmax}{\lambda_{\rm max}}
\begin{document}

\title[Bounded Ratios over Positive Definite Matrices]{Bounded Ratios of Products
of Principal Minors of Positive Definite Matrices}
\author{H. Tracy Hall}
\address{Department of Mathematics \\
Brigham Young University\\
Provo, UT  84602}
\email{h.tracy@gmail.com}

\author{Charles R. Johnson}
\address{Department of Mathematics \\
 College of William and Mary \\
 Williamsburg, VA  23185}
\email{crjohnso@math.wm.edu}

\subjclass{Primary 15A45; Secondary 15A15, 15A48, 05B35}
\keywords{Determinantal inequality, bounded ratio, matrix, positive definite, positive semidefinite, principal minor,
multiplicative semigroup, cone of inequalities,
rank type, nullity type, asymptotic nullity type, matroid, face volume inequality}

\begin{abstract}
Considered is the multiplicative semigroup of ratios of products of principal
minors bounded over all positive definite matrices.
A long history of literature identifies various elements
of this semigroup, all of which lie in a sub-semigroup
generated by Hadamard-Fischer inequalities.  Via cone-theoretic
techniques and the patterns of nullity among positive
semidefinite matrices, a semigroup containing all bounded
ratios is given.  This allows the complete determination
of the semigroup of bounded ratios for $4$-by-$4$ positive definite
matrices, whose $46$ generators include
ratios not implied by Hadamard-Fischer and ratios
not bounded by $1$.
For $n \ge 5$ it is shown that the containment of semigroups
is strict, but a generalization of nullity patterns,
of which one example is given, is conjectured
to provide a finite determination of all bounded ratios.
\end{abstract}

\maketitle

\section{Introduction}
For an $n$-by-$n$ matrix $A=(a_{ij})$ and an index set $S\subs N \equiv
\set{1,2,\ldots,n}$, $A[S]$ is the principal submatrix of $A$ lying in the rows
and columns indicated by $S$.
Given a collection $\alpha: \alpha_1, \alpha_2, \ldots, \alpha_p \subs N$ of index
sets and exponents $\expon_1, \ldots, \expon_p \in \RR^+$, let
\[ \alpha(A) = \prod_{i=1}^p (\det A[\alpha_i])^{\expon_i}. \]
At times we also abbreviate $\alpha(A)$ as
$\alpha_1^{\expon_1}\cdots\alpha_p^{\expon_p}$,
omitting any exponents equal to $1$.
We are interested in understanding which ratios
\[
\frac{\alpha}{\beta} = \frac{\alpha(A)}{\beta(A)}
\]
have an upper bound that is independent of $A$,
provided that $A$ is positive definite (PD).
We call such ratios {\em bounded}, and
{\em absolutely bounded} if $1$ is an upper bound.
The Hadamard-Fischer inequality \cite{hj} provides
a classical family of absolutely bounded ratios
in which
$\alpha_1 = S \cup T$ and $\alpha_2 = S \cap T$,
while $\beta_1 = S$ and $\beta_2 = T$, for any two
index sets $S,T \subs N$, with all exponents $1$.
This classical family is in fact bounded over
any class of invertible matrices with positive
principal minors and weakly sign-symmetric
non-principal minors \cite{c}, and we
will call such ratios \Kot\ ratios \cite{k}.
Historical information on determinantal inequalities for
PD matrices may be found in (\cite{hj}, ch. 7).
Over real symmetric matrices, bounded ratios
are equivalent to multiplicative inequalities
on the face volumes of a parallelotope.

A ratio is determined, up to equivalence, by the
overall exponent (numerator minus denominator) of
each nonempty subset of $N.$
The exponent of the empty set is chosen
to make the sum of exponents $0$,
and the collection of $2^n$ exponents is called
the {\em formal logarithm} of $\frac{\alpha}{\beta}$,
written $\log (\frac{\alpha}{\beta})$.
Formal logarithms span a hyperplane in $\RRR$
orthogonal to the all-$1$'s vector $\allones$.

Let $\cB_n$ be the set of bounded ratios,
$\cA_n$ the set of absolutely bounded ratios,
and $\cK_n$ the set of products of positive powers of \Kot\ ratios.
Then, for all $n$, $\cK_n \subs \cA_n \subs \cB_n$.
These are multiplicative semigroups,
and there are corresponding cones
$\log (\cK_n) \subs \log(\cA_n) \subs \log (\cB_n)$.
All previously known multiplicative determinantal inequalities lie in $\cK_n$.
In \cite{jb}, necessary conditions for boundedness
were given, implicitly defining a semigroup $\cE_n$
with $\cB_n \subs \cE_n$, and it was shown that
$\cK_3 = \cE_3$ but $\cA_4 \ne \cE_4$.
Here, we define a new semigroup $\cD_n$ by imposing
linear inequalities on $\log (\cD_n)$ coming from
rank deficient positive semidefinite (PSD) matrices.
We show that $\cB_n \subs \cD_n \subs \cE_n$
for all~$n$.
We also show that $\cA_4 \ne \cB_4 = \cD_4 \ne \cE_4$,
and give explicit generators for $\cB_4$.
Going further, we show that $\cB_5 \ne \cD_5$, but we conjecture
that for each $n$ there exists some finite system
of linear inequalities,
generalizing those coming from rank deficient PSD matrices,
that is sufficient to characterize $\log (\cB_n)$.

\section{Cone theoretic determination of $\cD_n$}
We call a ratio $\frac{\alpha}{\beta}$ {\em homogeneous} if
$\alpha(D)/\beta(D) = 1$ for every PD diagonal matrix $D$,
and call the set of all such ratios $\cH_n$.
All bounded ratios are homogeneous \cite{jb},
and $\log(\cH_n)$ is the subspace of $\RRR$ that is
orthogonal to the following set of $n + 1$ vectors:
the all-$1$'s vector $\allones$ and, for each
$i \in N$, the
vector whose entry $T$ (as $T$ ranges over $T \subs N$)
is $1$ whenever $i \in T$, and $0$ otherwise.
A homogeneous ratio is uniquely determined by
the exponents of subsets with cardinality
at least $2$.
Since the dimension of $\log(\cK_n)$ is
$2^n - n - 1$, the cone $\log (\cB_n)$ has full dimension
within the subspace $\log(\cH_n)$.

Let $M$ be a matrix with $n$ columns.
We define $\nt(M) \in \RRR$, the
{\em nullity type} of $M$,
as the vector of null space dimensions
for subsets of the columns of $M$.
(By convention the empty set of columns has
nullity $0$.)
The nullity type of $M$ also gives
the multiplicity of $0$ as
an eigenvalue for any principal
submatrix of $M^*M$.
For any $n$ there exist finitely
many distinct nullity types
(corresponding to representations of labeled matroids on $n$ elements).

Our first result is that the inner product of the formal logarithm
of a ratio with any nullity type gives a necessary condition
for boundedness.

\begin{thm}\label{thm:nec}
Let $\alphabet$ be a homogeneous ratio, let $M$
be a matrix with $n$
columns,
and define a family of matrices $A_\eps$, $0 < \eps < 1$,
by
\[
  A_\eps = M^*M + \eps I.
\]
Then $\alphabet$ is bounded over the family $A_\eps$
if and only if
$\log(\alphabet)^T \nt(M) \ge 0$.
\end{thm}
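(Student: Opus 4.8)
The plan is to analyze the family $A_\eps = M^*M + \eps I$ one principal minor at a time and extract the dominant power of $\eps$ as $\eps \to 0^+$. For each $S \subs \N$, the principal submatrix $A_\eps[S]$ equals $M_S^* M_S + \eps I$, where $M_S$ denotes the submatrix of $M$ consisting of the columns indexed by $S$. Since $M_S^* M_S$ is positive semidefinite, its eigenvalues $\mu_1, \dots, \mu_{|S|}$ are nonnegative, and by the definition of the nullity type exactly $\nt(M)_S$ of them vanish. I would use this to pin down the order to which $\det A_\eps[S]$ degenerates as $\eps \to 0^+$.

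Concretely, each $A_\eps[S]$ is positive definite for $\eps > 0$, so $\det A_\eps[S] = \prod_j (\mu_j + \eps) > 0$, and factoring out the zero eigenvalues gives
\[
  \det A_\eps[S] = \eps^{\nt(M)_S} \prod_{\mu_j > 0} (\mu_j + \eps) = \eps^{\nt(M)_S}\,\bigl(c_S + O(\eps)\bigr),
\]
where $c_S = \prod_{\mu_j > 0}\mu_j > 0$ is the product of the nonzero eigenvalues (an empty product being $1$). The essential content is thus that $\det A_\eps[S]$ vanishes to order exactly $\nt(M)_S$ in $\eps$, with a strictly positive leading coefficient $c_S$.

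Writing $\alphabet(A_\eps) = \prod_{S \subs \N} (\det A_\eps[S])^{\log(\alphabet)_S}$ in terms of the net exponents recorded by the formal logarithm, I would take logarithms to obtain
\[
  \log \alphabet(A_\eps) = \sum_{S \subs \N} \log(\alphabet)_S \,\log \det A_\eps[S],
\]
and then substitute the factorization above and collect the coefficient of $\log\eps$ to get
\[
  \log \alphabet(A_\eps) = \bigl(\log(\alphabet)^T \nt(M)\bigr)\log\eps + \sum_{S \subs \N}\log(\alphabet)_S\,\log\bigl(c_S + O(\eps)\bigr).
\]
As $\eps \to 0^+$ the final sum converges to the finite constant $\sum_S \log(\alphabet)_S \log c_S$, so the asymptotics are governed entirely by the sign of the scalar $\log(\alphabet)^T \nt(M)$ multiplying $\log \eps \To -\infty$.

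The conclusion then follows by a sign analysis, which is where the only real care is needed. On $(0,1)$ the map $\eps \mapsto \log\alphabet(A_\eps)$ is continuous, and as $\eps \to 1^-$ it has a finite limit; hence $\alphabet$ fails to be bounded over the family precisely when $\limsup_{\eps\to 0^+}\log\alphabet(A_\eps) = +\infty$. If $\log(\alphabet)^T\nt(M) \ge 0$, the leading term tends to $-\infty$ or to a finite constant, so $\log\alphabet(A_\eps)$ stays bounded above and the ratio is bounded; if instead $\log(\alphabet)^T\nt(M) < 0$, the leading term tends to $+\infty$ and dominates the bounded remainder, so the ratio is unbounded. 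The main obstacle I anticipate is essentially bookkeeping: justifying the clean factoring of the exact power $\eps^{\nt(M)_S}$ simultaneously across all $2^n$ subsets and confirming that the remainder sum stays bounded, so that the single inner product $\log(\alphabet)^T\nt(M)$ genuinely dictates boundedness.
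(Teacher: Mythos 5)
Your proposal is correct and follows essentially the same route as the paper: both factor each $\det A_\eps[S]$ over the eigenvalues of $M_S^*M_S$ shifted by $\eps$, identify the total power of $\eps$ as $\log(\alphabet)^T\nt(M)$, and observe that the remaining eigenvalue contributions stay uniformly bounded away from $0$ and $\infty$ on $0<\eps<1$. The only cosmetic difference is that you pass to logarithms and argue asymptotically, while the paper writes explicit two-sided bounds $\eps^s(\lmin/(\lmax+1))^{nr} \le \alphabet(A_\eps) \le \eps^s((\lmax+1)/\lmin)^{nr}$; the substance is identical.
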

\begin{proof}
Let $s=\log(\alphabet)^T \nt(M)$,
and let $r$ represent the sum of the
exponents in $\alpha$,
which is also the sum of the exponents in $\beta$.
Let $\Lambda$ be the set of all nonzero eigenvalues of $M^*M$ and
its principal submatrices,
and let $\lmin$ and $\lmax$ denote
respectively the minimum and maximum elements
of $\Lambda \cup \{1\}$,
so that $\lmin \le 1 \le \lmax$.
For any $0<\eps<1$, the matrix $A_\eps$ is PD,
and the quantity $\alphabet(A_\eps)$ can be expressed
entirely in terms of the eigenvalues of principal submatrices of $A_\eps$.
Each of these eigenvalues is $\eps$ or lies
within the range $\lmin+\eps$ to $\lmax+\eps$.
The total exponent of $\eps$ in the eigenvalue product
is exactly the inner product
$\log(\alphabet)^T \nt(M)=s$.
The contribution of all remaining eigenvalues
either to the numerator $\alpha(A_\eps)$
or to the denominator $\beta(A_\eps)$
lies within the range $\lmin^{nr}$
to $(\lmax+1)^{nr}$.
Thus we have
\[
\eps^s\left(\frac{\lmin}{\lmax+1}\right)^{nr}
\le
\alphabet(A_\eps)
\le
\eps^s\left(\frac{\lmax+1}{\lmin}\right)^{nr}.
\]
If $s < 0$ the left inequality implies that
$\alphabet(A_\eps)$ is unbounded,
and if $ s \ge 0$ the right inequality
implies that $\alphabet(A_\eps)$ is bounded by a constant.
\end{proof}

Given $n$, let
$\nu_1, \ldots, \nu_{\ell}$ be a complete list
of the nullity types of matrices
with $n$ columns.
The {\em dual nullity} semigroup $\cD_n$ is defined as the
set of homogeneous ratios $\alphabet$
such that $\log(\alphabet)^T \nu_i\ge 0$
for all $1 \le i \le \ell$.

\begin{cor}
  For all $n$, $\cB_n \subs \cD_n$.  Moreover, if each extreme ray
  of $\log(\cD_n)$ represents a bounded ratio,
  then $\cB_n = \cD_n$.
\end{cor}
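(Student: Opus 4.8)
The plan is to establish the two inclusions $\cB_n \subs \cD_n$ and $\cD_n \subs \cB_n$ separately, the first unconditionally and the second under the extreme-ray hypothesis.

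For $\cB_n \subs \cD_n$, I would start from an arbitrary $\alphabet \in \cB_n$. Every bounded ratio is homogeneous, so $\alphabet$ is homogeneous, and being bounded over all PD matrices it is in particular bounded over each family $A_\eps = M^*M + \eps I$ as $M$ runs over all matrices with $n$ columns. Theorem~\ref{thm:nec} then gives $\log(\alphabet)^T \nt(M) \ge 0$ for every such $M$. Choosing $M$ to realize each nullity type $\nu_1,\ldots,\nu_\ell$ in turn recovers exactly the defining inequalities $\log(\alphabet)^T \nu_i \ge 0$ of $\cD_n$, so $\alphabet \in \cD_n$.

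For the converse under the hypothesis, I would pass to formal logarithms. The set $\log(\cD_n)$ is cut out of the subspace $\log(\cH_n)$ by the finitely many half-space conditions $x^T \nu_i \ge 0$, hence is a polyhedral cone. In the other direction, $\log(\cB_n)$ is a convex cone: $\cB_n$ is closed under products and under positive real powers (if $\alphabet(A)\le C$ for all PD $A$ then $(\alphabet)^\expon(A) \le C^\expon$), so $\log(\cB_n)$ is closed under addition and nonnegative scaling. By Minkowski--Weyl, a \emph{pointed} polyhedral cone is the set of finite nonnegative combinations of representatives of its extreme rays. Since the hypothesis places each such representative in $\log(\cB_n)$, and $\log(\cB_n)$ is closed under finite nonnegative combinations, we would get $\log(\cD_n) \subs \log(\cB_n)$, i.e. $\cD_n \subs \cB_n$; no limiting argument is needed because the generating set is finite. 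Together with the first inclusion this yields $\cB_n = \cD_n$.

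The one genuinely substantive step — and the point on which the statement depends — is that $\log(\cD_n)$ is pointed, since only then do its extreme rays generate it. Pointedness fails precisely when some nonzero $v \in \log(\cH_n)$ has $v^T \nu_i = 0$ for all $i$. To exclude this I would exhibit, for each $T \subs \N$ with $|T| \ge 2$, a matrix $M_T$ with $n$ columns whose nullity type is the indicator of the up-set of $T$, that is $\nt(M_T)_S = 1$ for $S \supseteq T$ and $0$ otherwise; this is achieved by taking the columns indexed by $T$ generic within a $(|T|-1)$-dimensional space and the remaining columns in general position, so that $M_T[S]$ is column-rank-deficient exactly when $T \subseteq S$. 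Then $v^T \nt(M_T) = \sum_{S \supseteq T} v_S$, and because these up-set indicators are unitriangular with respect to reverse inclusion, the equations $\sum_{S \supseteq T} v_S = 0$ for all $|T|\ge 2$ force $v_S = 0$ for every $S$ with $|S|\ge 2$, starting from $S = \N$ and descending. Since a homogeneous formal logarithm is determined by its coordinates on subsets of cardinality at least $2$, this gives $v = 0$ and hence pointedness. I expect the main care to lie in verifying that the general-position construction really produces the clean up-set nullity type (equivalently, that the relevant matroid is realizable over the field in question), after which the triangularity makes the rest immediate.
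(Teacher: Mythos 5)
Your proof is correct and matches the argument the paper intends: the forward inclusion is immediate from Theorem~\ref{thm:nec} applied to the families $M^*M+\eps I$ (every bounded ratio is homogeneous and bounded over each such family, so its formal logarithm pairs nonnegatively with every nullity type), and the reverse inclusion is the standard fact that a pointed polyhedral cone is the nonnegative hull of finitely many representatives of its extreme rays, combined with $\cB_n$ being closed under products and positive real powers. The only step you handle differently is pointedness, which the paper establishes separately (after Proposition~\ref{prop:one}) from the fact that the vectors $\nt(M^S)$ for $|S|\le n-2$ together with the orthogonal complement of $\log(\cH_n)$ span $\RRR$, whereas you use the up-set nullity types $\nt(M_T)$ and a triangularity argument; both verifications are valid.
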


The dual nullity cone $\log(\cD_n)$ is a finite intersection
of half-spaces
and thus its set of extreme rays can
(in principle, and sometimes in practice)
be explicitly calculated.
Some nullity types are redundant to this calculation:
$\nt(I)$ is trivial;
$\nt([ M_1 | \mathbf 0])
 -
 \nt(M_1 \oplus I)$
is orthogonal to $\cH_n$,
so one of these may be omitted;
and $\nt(M_1 \oplus M_2)
= \nt(M_1 \oplus I) + \nt(I \oplus M_2)$.
Given a matrix $M$ with $n$ columns,
let $\rt(M)$ be the vector in $\RRR$
whose entry for each $T \subs N$
is the rank of the subset $T$ of the
columns of $M$.
Since $\nt(M) + \rt(M)$
is the vector $\left[|T|\right]$ of cardinalities, which
is orthogonal to $\cH_n$,
$\nt(M)$ and $-\rt(M)$ are interchangeable
for the purposes of calculating $\cD_n$.
This fact can be useful computationally
when $M$ has low rank.

\begin{exam}
\label{ex1}
Let $n=3$, and let $\nu_1, \ldots, \nu_5$ be the
nullity types of the matrices
\[
  \left[
    \begin{array}{ccc}
      1 & 1 & 1
    \end{array}
  \right],
  \left[
    \begin{array}{ccc}
      0 & 1 & 1
    \end{array}
  \right],
  \left[
    \begin{array}{ccc}
      1 & 0 & 1
    \end{array}
  \right],
  \left[
    \begin{array}{ccc}
      1 & 1 & 0
    \end{array}
  \right],
  \mbox{ and }
  \left[
    \begin{array}{ccc}
      1 & 0 & 1 \\
      0 & 1 & 1
    \end{array}
  \right].
\]
Let $\cE_3$ be the intersection of the five sets
$ \{\alphabet \in \cH_3 :\ \log(\alphabet)^T \nu_i \ge 0\}.$
The six extreme rays of $\log(\cE_3)$ are the formal logarithms of
the \Kot\ ratios
\[
  \frac{\{12\}\,\emptyset\ }{\{1\}\{2\}},
  \frac{\{13\}\,\emptyset\ }{\{1\}\{3\}},
  \frac{\{23\}\,\emptyset\ }{\{2\}\{3\}},
  \frac{\{123\}\{1\}}{\{12\}\{13\}},
  \frac{\{123\}\{2\}}{\{12\}\{23\}},
  \mbox{ and }
  \frac{\{123\}\{3\}}{\{13\}\{23\}}.
\]
We thus have $\cK_3 \subs \cA_3 \subs \cB_3 \subs \cD_3 \subs \cE_3 \subs \cK_3$
and in particular $\cB_3 = \cK_3$, as was shown in \cite{jb}.
\end{exam}

The symmetric group on $n$ elements
acts naturally on $\RRR$ and
preserves
every set we have defined.
Set complementation $S^c = N \setminus S$
also acts naturally and preserves $\log(\cK_n)$ and $\log(\cH_n)$.
Since
$\frac{S}{\emptyset}(A) = \frac{S^c}{N}(A^{-1})$
by Jacobi's identity \cite{hj},
$\log(\cA_n)$ and $\log(\cB_n)$ are also invariant under
the action of complementation.
To show that $\log(\cD_n)$ is invariant
under complementation requires
some matroid theory~\cite{oxley}:
the complement of a nullity type is
equivalent, up to vectors orthogonal
to $\log(\cH_n)$,
to the nullity type of the dual matroid.

Theorem~\ref{thm:nec} generalizes
the known set-theoretic necessary conditions ST0, ST1, and ST2.
\begin{thm}
\cite{jb} For any bounded ratio $\alphabet$ and any given
index set $S \subs N$, the sum of exponents in $\alphabet$ for
supersets of $S$ is nonnegative (ST1), and likewise for subsets of $S$ (ST2).
Any ratio satisfying ST1 and ST2
also satisfies homogeneity (ST0).
\end{thm}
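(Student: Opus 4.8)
The plan is to obtain ST1 and ST2 as special cases of Theorem~\ref{thm:nec}, choosing for each a matroid whose nullity type encodes the relevant set condition, and then to deduce ST0 from ST1 and ST2 by a purely linear argument. Write $x = \log(\alphabet)$, so that $x_T$ is the overall exponent of $T$ and $\sum_{T \subs N} x_T = 0$ by the normalization of the formal logarithm. Since a bounded ratio is homogeneous \cite{jb}, it meets the hypothesis of Theorem~\ref{thm:nec}; and because each $A_\eps$ is PD, a ratio bounded over all PD matrices is in particular bounded over every family $\set{A_\eps}$. Hence for a bounded $\alphabet$ we have $x^T \nt(M) \ge 0$ for every matrix $M$ with $n$ columns, and the whole task reduces to selecting the right $M$.

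For ST1, fix $S \subs N$ and take $M = M_S \oplus I$, where the $|S|$ columns of $M_S$ form a circuit and the remaining columns are an identity block. A concrete $M_S$ consists of $|S| - 1$ standard basis vectors together with the negative of their sum, so that $S$ is minimally dependent: any proper subset of its columns is independent, while $S$ itself has nullity $1$. Because nullity is additive across a direct sum and the identity block never contributes, $\nt(M)_T$ equals $1$ when $T \supseteq S$ and $0$ otherwise. Therefore $x^T \nt(M) = \sum_{T \supseteq S} x_T$, and Theorem~\ref{thm:nec} forces this sum to be nonnegative, which is exactly ST1.

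For ST2 the subset condition cannot be an indicator of a nullity type directly --- if every singleton of $S$ were a loop, larger subsets of $S$ would acquire nullity greater than $1$ --- so instead I would take $M$ to be the single-row matrix whose $j$-th entry is $0$ for $j \in S$ and $1$ for $j \notin S$. Then the columns indexed by $T$ have rank $1$ exactly when $T$ meets $S^c$, giving $\nt(M)_T = |T| - [\,T \cap S^c \ne \emptyset\,]$. Using homogeneity, namely $\sum_T x_T = 0$ and $\sum_T x_T |T| = 0$ (the latter because $[\,|T|\,] = \sum_i \chi_i$, where $\chi_i \in \RRR$ has entry $1$ on sets containing $i$), this inner product collapses to $\sum_{T \subs S} x_T$, which Theorem~\ref{thm:nec} again forces to be nonnegative; this is ST2. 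One could equally well deduce ST2 from ST1 applied to the complemented ratio, which is bounded by Jacobi's identity, and indeed the two routes are matroid duality in disguise.

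Finally, for ST0 I would argue combinatorially, assuming only that $x$ satisfies ST1 and ST2. Homogeneity of $\alphabet$ amounts to $x \perp \allones$ together with $x \perp \chi_i$ for each $i$. The first orthogonality is automatic from $\sum_T x_T = 0$. For the second, ST1 at $S = \set{i}$ gives $\sum_{T \ni i} x_T \ge 0$, while ST2 at $S = N \setminus \set{i}$ gives $\sum_{T \not\ni i} x_T \ge 0$; since these two sums add to $\sum_T x_T = 0$, each must vanish, so $x \perp \chi_i$. The one genuinely delicate point is the ST2 realization: unlike the superset condition, the subset condition is not itself a nullity type, and handling it requires either the rank-based reformulation above or the passage to the dual matroid. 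Everything else is bookkeeping in $\RRR$.
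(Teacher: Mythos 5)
Your argument is correct, but note what the paper actually does with this statement: it is imported from \cite{jb} and given no proof of its own. What you have produced is a self-contained derivation assembled from the paper's other results, and it matches that machinery exactly. Your circuit-plus-identity matrix for ST1 is the paper's $M_S$ (a permutation of $[I\,|\,e]\oplus I$), your single-row matrix for ST2 is the paper's $M^S$, and pushing their nullity types through Theorem~\ref{thm:nec} is precisely the computation in the proof of Proposition~\ref{prop:one}, run in the opposite direction: the paper uses ST1/ST2 (already known from \cite{jb}) to justify the nullity-type reformulation, whereas you use the nullity types to establish ST1/ST2. The logic is not circular, since Theorem~\ref{thm:nec} is proved independently; the one external input you still need is that bounded ratios are homogeneous, which the paper likewise takes from \cite{jb}, and which is genuinely required before ST2 can be extracted (your identity $\sum_T x_T|T|=0$ uses it, and it cannot be bootstrapped from your ST0 argument because that argument presupposes ST1 and ST2). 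Your proof of ST0, pairing ST1 at $\{i\}$ with ST2 at $\{i\}^c$ and using $\sum_T x_T=0$, is the same observation the paper makes when it says the cases $|S|\le 1$ and $|S|\ge n-1$ are equivalent to homogeneity. The only cosmetic divergence is that you handle ST1 for $|S|=2$ directly by a two-column circuit, where the paper routes that case through ST2 for the complementary set; both are valid, and yours is arguably cleaner since it treats all $|S|$ uniformly.
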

We define $\cE_n$ as the set of ratios satisfying
the elementary necessary conditions
ST1 and ST2 for every $S \subs N$.
These conditions can be restated in terms of nullity types.
Given an index set $S \subs N$ with
$|S| \ge 3$, define
$M_{S}$ to be the matrix of size $(n-1) \times n$
that is a permutation of $[I | e] \oplus I$,
with the $[I | e]$ summand occurring in the
columns of $S$ and the $I$ summand occurring
in the complement of $S$.
Then for $T \subs N$,
entry $T$ of $\nt(M_{S})$ is $1$
if $S \subs T$ and $0$ otherwise.
Given an index set $S \subs N$ with
$|S| \le n-2$, define
$M^{S}$ to be the matrix of size $1 \times n$
whose entries are $0$ in the columns
of $S$ and $1$ elsewhere.
Then for $T \subs N$,
entry $T$ of $\rt(M^S)$ is $0$
if $T \subs S$ and $1$ otherwise.
For example, with $n=3$, the five matrices listed in Example~\ref{ex1}
are $M^{\emptyset}$,
$M^{\{1\}}$,
$M^{\{2\}}$,
$M^{\{3\}}$,
and $M_{\{123\}}$.
\begin{prop}
\label{prop:one}
A homogeneous ratio $\alphabet$ satisfies
conditions ST1 and ST2 if and only if
$\log(\alphabet)^T \!\nt(M_S) \ge 0$
for $|S| \ge 3$ and
$\log(\alphabet)^T \!\nt(M^S) \ge 0$
for $|S| \le n-2$.
\end{prop}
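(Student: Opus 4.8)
The plan is to reduce everything to two elementary identities for a homogeneous ratio and then to settle the low-cardinality cases by hand. Write $x = \log(\alphabet)$ and, for $T \subs N$, let $x_T$ be its entry. First I would record the identity $\log(\alphabet)^T \nt(M_S) = \sum_{T \supseteq S} x_T$, valid for every $S$: it is immediate from the stated form of $\nt(M_S)$, whose $T$-entry equals $1$ exactly when $S \subs T$. Dually, since $\nt$ and $-\rt$ are interchangeable on homogeneous ratios and $\sum_T x_T = 0$, I would compute $\log(\alphabet)^T \nt(M^S) = -\log(\alphabet)^T \rt(M^S) = -\sum_{T \not\subs S} x_T = \sum_{T \subs S} x_T$ for every $S$. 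Thus the two families of inner-product inequalities in the proposition are literally the conditions ST1 for $|S| \ge 3$ and ST2 for $|S| \le n-2$.

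The forward implication is then immediate: if $\alphabet$ satisfies ST1 and ST2 for all $S$, the identities show the relevant inner products are nonnegative. For the converse I would assume the inner-product inequalities --- equivalently ST1 for $|S| \ge 3$ and ST2 for $|S| \le n-2$, together with homogeneity --- and recover the missing instances, namely ST1 for $|S| \le 2$ and ST2 for $|S| \ge n-1$. The extreme cases are forced to be equalities by homogeneity alone: $\sum_{T \supseteq \emptyset} x_T = \sum_T x_T = 0$ and $\sum_{T \supseteq \{i\}} x_T = \sum_{T \ni i} x_T = 0$ give ST1 for $|S| \le 1$, while $\sum_{T \subs N} x_T = 0$ and $\sum_{T \subs N \setminus \{i\}} x_T = \sum_{T \not\ni i} x_T = 0$ give ST2 for $|S| \ge n-1$.

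The only genuinely new case, and the main obstacle, is ST1 for a pair $S = \{a,b\}$. Here I would partition all $T \subs N$ according to their intersection with $\{a,b\}$ and let $P, Q, R, W$ be the sums of $x_T$ over those $T$ containing both of $a,b$, exactly $a$, exactly $b$, and neither, respectively. Orthogonality of $x$ to $\allones$ gives $P + Q + R + W = 0$, while orthogonality to the two homogeneity vectors indexed by $a$ and $b$ gives $P + Q = 0$ and $P + R = 0$; hence $Q = R = -P$, and substituting into the first relation yields $W = P$. Since $P = \sum_{T \supseteq \{a,b\}} x_T$ is exactly the ST1 sum for $\{a,b\}$ and $W = \sum_{T \subs N \setminus \{a,b\}} x_T$ is the ST2 sum for the $(n-2)$-element set $N \setminus \{a,b\}$, the imposed ST2 condition for $N \setminus \{a,b\}$ gives $P = W \ge 0$, which is ST1 for $\{a,b\}$ and completes the converse. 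The whole difficulty is concentrated in this pair case, where homogeneity converts a superset sum into the complementary subset sum that the hypotheses control; once the identity $W = P$ is in hand the remainder is bookkeeping.
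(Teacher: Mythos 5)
Your proof is correct and follows essentially the same route as the paper's: the same translation of the inner products with $\nt(M_S)$ and $\nt(M^S)$ into superset and subset exponent sums, the same disposal of the cases $|S|\le 1$ and $|S|\ge n-1$ via homogeneity, and the same four-block partition by membership of $a$ and $b$ to derive ST1 for pairs from ST2 for their complements. You simply spell out the bookkeeping ($Q=R=-P$, $W=P$) that the paper leaves implicit.
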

\begin{proof}
The cases $|S| \le 1$ for ST1 and
$|S| \ge n-1$ for ST2 are equivalent
to the assumed homogeneity.
The case ST1 with $|S|=2$ will follow
from ST2 with $|S| = n-2$:
for a homogeneous ratio,
ST1 is satisfied for $\{i,j\}$ if and only if
ST2 is satisfied for $\{i,j\}^c$,
as can be seen
by partitioning $2^N$ into four groups
depending on the membership of $i$
and $j$, and comparing
the total exponents of these groups under homogeneity.
For $|S| \ge 3$,
$\log(\alphabet)^T \nt(M_S)$
is nonnegative if and only if
$S$ occurs as a subset
in $\alpha$ with a total exponent
at least as great as in $\beta$.
For $|S| \le n-2$,
$\log(\alphabet)^T \left(\allones - \rt(M^S)\right)$
is nonnegative if and only if
$S$ occurs as a superset
in $\alpha$ with a total exponent
at least as great as in $\beta$.
Since $\alphabet$ is homogeneous
and $\nt(M^S) + \rt(M^S) - \allones$
is orthogonal to $\log(\cH_n)$,
this is equivalent to
$\log(\alphabet)^T \nt(M^S) \ge 0$.
\end{proof}

\begin{cor}
For all $n$, $\cD_n \subs \cE_n$.
\end{cor}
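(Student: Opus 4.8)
The plan is to recognize that $\cE_n$, as characterized by Proposition~\ref{prop:one}, is cut out by nonnegativity conditions against a specific subcollection of the very nullity types that define $\cD_n$; hence $\cD_n$, which imposes nonnegativity against the \emph{entire} collection, automatically satisfies at least these conditions and is therefore contained in $\cE_n$. In short, $\cE_n$ is the relaxation of $\cD_n$ obtained by retaining only the ST-type nullity constraints and discarding the rest.

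First I would observe that both semigroups consist of homogeneous ratios: $\cD_n$ by definition, and $\cE_n$ because the cited theorem records that ST1 and ST2 together imply ST0, i.e.\ homogeneity. So a ratio $\alphabet \in \cD_n$ is homogeneous, and to conclude $\alphabet \in \cE_n$ it suffices to verify that $\alphabet$ satisfies ST1 and ST2 for every $S \subs N$. For this I would invoke Proposition~\ref{prop:one}, which reduces the verification of ST1 and ST2 for a homogeneous ratio to the finite list of inequalities $\log(\alphabet)^T \nt(M_S) \ge 0$ for $|S| \ge 3$ together with $\log(\alphabet)^T \nt(M^S) \ge 0$ for $|S| \le n-2$.

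The crucial observation is then that every matrix $M_S$ and $M^S$ has exactly $n$ columns—immediate from their definitions as matrices of sizes $(n-1)\times n$ and $1 \times n$ respectively—so each of the vectors $\nt(M_S)$ and $\nt(M^S)$ appears in the complete list $\nu_1, \ldots, \nu_\ell$ of nullity types used to define $\cD_n$. Membership $\alphabet \in \cD_n$ therefore guarantees $\log(\alphabet)^T \nt(M_S) \ge 0$ and $\log(\alphabet)^T \nt(M^S) \ge 0$ as special cases of the defining inequalities, and Proposition~\ref{prop:one} then delivers ST1 and ST2, placing $\alphabet$ in $\cE_n$.

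I expect no substantive obstacle, as the argument is purely a bookkeeping comparison of two finite lists of linear inequalities: the inequalities defining $\cE_n$ form a subset of those defining $\cD_n$. The only point requiring any care is confirming that $M_S$ and $M^S$ genuinely have $n$ columns, so that their nullity types are legitimately among the $\nu_i$; once that is checked the containment follows formally.
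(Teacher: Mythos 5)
Your proposal is correct and is exactly the argument the paper intends: the corollary is stated without proof as an immediate consequence of Proposition~\ref{prop:one}, since the vectors $\nt(M_S)$ and $\nt(M^S)$ are nullity types of matrices with $n$ columns and hence appear among the $\nu_i$ defining $\cD_n$. Your additional care in noting that membership in $\cD_n$ supplies the homogeneity hypothesis needed to apply Proposition~\ref{prop:one} is a worthwhile bookkeeping point that the paper leaves implicit.
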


The name $\cE_3$ in Example~\ref{ex1} is
now justified, and we have shown,
as in~\cite{jb}, that $\cK_3 = \cE_3$.
Since the vectors $\nt(M^S)$ for $|S| \le n-2$
and the vectors orthogonal
to $\log(\cH_3)$ together span $\RRR$,
$\log(\cB_n)$ is a pointed cone:
no nontrivial ratio is both
bounded and bounded away from $0$.

\section{Bounding ratios not in $\cK_n$}
Given a ratio $\alphabet$
in $\cD_n \setminus \cK_n$,
it may be difficult to determine whether
$\alphabet$ is bounded.
We introduce a technique that can sometimes prove boundedness, starting
with a known observation
whose first nontrivial case is $n = 3$:

\begin{lem}
\label{lem:Fied}
\cite{fiedler}
Let $A=(a_{ij})$ be an $n$-by-$n$ PD matrix with inverse
$B=(b_{ij})$.  Then
for each $i=1,\ldots,n$
\begin{equation}\label{Fied}
2\sqrt{a_{ii}b_{ii}}+(n-2)\leq\sum_{j=1}^n\sqrt{a_{jj}b_{jj}}.
\end{equation}
\end{lem}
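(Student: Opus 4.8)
The plan is to reduce \eqref{Fied} to a normalized configuration and then recognize it as a system of polygon inequalities. First I would observe that the quantity $a_{jj}b_{jj}$ is invariant under the congruence $A \mapsto D^{-1/2} A D^{-1/2}$ by any positive diagonal $D$, since this sends $B \mapsto D^{1/2} B D^{1/2}$ and hence multiplies $a_{jj}$ by $1/d_j$ and $b_{jj}$ by $d_j$. Taking $D = \operatorname{diag}(a_{jj})$, I may therefore assume that $A$ has unit diagonal, so that $s_j := \sqrt{a_{jj}b_{jj}} = \sqrt{b_{jj}}$. Writing $A = V^T V$ with unit columns $p_j = V e_j$ and letting $q_j$ be the dual basis (the columns of $V(V^TV)^{-1}$), one has $\langle p_j, q_k\rangle = \delta_{jk}$ and $\|q_j\|^2 = b_{jj} = s_j^2$. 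Each $2$-by-$2$ compression $\left(\begin{smallmatrix} 1 & 1 \\ 1 & b_{jj}\end{smallmatrix}\right)$ of the rank-$n$ PSD matrix $\left(\begin{smallmatrix} A & I \\ I & B\end{smallmatrix}\right)$ is PSD, giving $s_j \ge 1$. With $x_j := s_j - 1 \ge 0$, inequality \eqref{Fied} for the index $i$ is exactly the polygon inequality $x_i \le \sum_{j\ne i} x_j$.

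Next I would extract the key identity coupling $s_i$ to the remaining data. Decomposing $q_i = p_i + q_i^\perp$ with $q_i^\perp \perp p_i$ gives $\|q_i^\perp\|^2 = b_{ii} - 1 = s_i^2 - 1$, and expanding $q_i^\perp$ against the dual basis shows $q_i^\perp = -\sum_{k \ne i} a_{ik} q_k$, whence
\[
s_i^2 - 1 \;=\; \Big\|\sum_{k\ne i} a_{ik} q_k\Big\|^2 \;=\; -\sum_{k\ne i} a_{ik} b_{ik}.
\]
The first expression, together with the triangle inequality and $\|q_k\| = s_k$, yields $\sqrt{s_i^2-1} \le \sum_{k\ne i} |a_{ik}|\, s_k \le \sum_{k\ne i} s_k$, and since $s_i - 1 \le \sqrt{s_i^2-1}$ this already gives the weak bound $x_i \le \sum_{k\ne i} s_k$.

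The main obstacle is that this weak bound overshoots the target by the additive constant $n-1$: replacing $\sum_{k\ne i} s_k$ by $\sum_{k\ne i}(s_k - 1)$ cannot be achieved by any termwise estimate, because the bounds $|a_{ik}| \le 1$ and $|b_{ik}| \le s_i s_k$ cannot be simultaneously tight, so the loss is genuinely global. To recover the missing $n-1$ I would argue by induction on $n$ through the Schur complement $\bar A = A/A[\{i\}]$, an $(n-1)$-by-$(n-1)$ PD matrix whose inverse is the principal block $B[N\setminus\{i\}]$; thus $\bar A$ reproduces exactly the diagonal inverse entries $b_{kk} = s_k^2$ for $k \ne i$, while its own diagonal is $1 - a_{ik}^2 \le 1$. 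Applying the inductive hypothesis to $\bar A$ (after renormalizing its diagonal) controls the $s_k$ among themselves, and the remaining task is to combine those $(n-1)$-variable polygon inequalities with the identity above so that the correlations $a_{ik}$ enter with exactly the right weights. I expect this bookkeeping---propagating the normalization constants through the Schur complement while keeping the estimate on $s_i^2 - 1$ sharp---to be the crux of the argument; the near-equality of \eqref{Fied} along sequences of PD matrices degenerating to the boundary (for instance equicorrelation matrices with correlation tending to its extreme value) shows that no slack is available and that the coupling must be tracked exactly.
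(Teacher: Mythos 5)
The paper itself does not prove this lemma; it quotes it from Fiedler (the citation \cite{fiedler}), so your argument has to stand entirely on its own. Your preliminary reductions are correct and cleanly executed: the invariance of $a_{jj}b_{jj}$ under diagonal congruence, the normalization $s_j=\sqrt{b_{jj}}\ge 1$, the restatement of \eqref{Fied} as the polygon inequality $x_i\le\sum_{j\ne i}x_j$ with $x_j=s_j-1$, and the identity $s_i^2-1=\bigl\|\sum_{k\ne i}a_{ik}q_k\bigr\|^2=-\sum_{k\ne i}a_{ik}b_{ik}$ all check out.

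However, the proof is not complete, and the gap is not just bookkeeping. Your final paragraph is a plan rather than an argument, and by your own account the unperformed step is ``the crux.'' More seriously, the chain of estimates you set up cannot be closed in the form you propose: you pass from $x_i=s_i-1$ to $\sqrt{s_i^2-1}$ and hope that a sharper, global treatment of $\bigl\|\sum_{k\ne i}a_{ik}q_k\bigr\|$ will bring the right-hand side down to $\sum_{k\ne i}(s_k-1)$, but the inequality $\sqrt{s_i^2-1}\le\sum_{k\ne i}(s_k-1)$ is already false for $n=2$ (take $a_{12}=\tfrac12$: then $s_1=s_2=2/\sqrt{3}$, so $\sqrt{s_1^2-1}=1/\sqrt{3}\approx 0.577$ while $s_2-1\approx 0.155$, even though the lemma itself holds there with equality). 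Hence no improvement on the right-hand side alone can rescue the relaxation $s_i-1\le\sqrt{s_i^2-1}$; the two losses must be coupled, for instance by carrying the factor $s_i+1$ from $s_i^2-1=(s_i-1)(s_i+1)$ through to the end. The Schur-complement induction is likewise not connected to the goal as stated: the inductive hypothesis applied to $\bar A=A/A[\{i\}]$ constrains only the rescaled quantities $s_k\sqrt{1-a_{ik}^2}$ for $k\ne i$, with the index $i$ entirely eliminated, and you give no mechanism for reintroducing $s_i$ or for undoing that rescaling with the correct weights. Until one of these devices is actually carried out, the lemma remains unproved.
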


Eliminating a copy of $i$ on each side,
replacing the sum with $(n-1)$ times the maximum, and using the fact that
$b_{ii} = \frac{\{i\}^c}{N}(A)$, we conclude:

\begin{cor}\label{cor:Fied}
For $n \ge 3$ and $A$ PD, and for any $i \in N$,
\[ \min_{j \ne i}\set{\frac{\set{i}\set{i}^c}{\{j\}\{j\}^c}(A)} \leq (n-1)^2. \]
\end{cor}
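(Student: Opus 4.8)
The plan is to reduce the displayed ratio to a quotient of the products $a_{jj}b_{jj}$ appearing in Fiedler's inequality, and then to extract the bound by an elementary estimate. First I would rewrite $\frac{\set{i}\set{i}^c}{\{j\}\{j\}^c}(A)$ in a form to which Lemma~\ref{lem:Fied} applies. By Jacobi's identity the $(i,i)$ entry of $B=A^{-1}$ is $b_{ii}=\frac{\{i\}^c}{N}(A)=\det A[\{i\}^c]/\det A$, so $\det A[\{i\}^c]=b_{ii}\det A$, and likewise for $j$. Substituting these into the numerator and denominator, the factor $\det A$ cancels and the ratio collapses to
\[
\frac{\set{i}\set{i}^c}{\{j\}\{j\}^c}(A)=\frac{a_{ii}\,b_{ii}}{a_{jj}\,b_{jj}}.
\]
Writing $x_k=\sqrt{a_{kk}b_{kk}}$, the ratio is $x_i^2/x_j^2$, so the quantity to be bounded is $\min_{j\ne i}x_i^2/x_j^2 = x_i^2/\bigl(\max_{j\ne i}x_j\bigr)^2$.

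Next I would invoke inequality~\eqref{Fied}, which in this notation reads $2x_i+(n-2)\le\sum_{j=1}^n x_j$. Subtracting the single term $x_i$ from each side (the ``eliminating a copy of $i$'' step) gives $x_i+(n-2)\le\sum_{j\ne i}x_j$. The sum on the right has $n-1$ terms, each at most $\max_{j\ne i}x_j$, so replacing it by $(n-1)$ times its largest term yields
\[
x_i+(n-2)\le(n-1)\max_{j\ne i}x_j,
\qquad\text{hence}\qquad
\max_{j\ne i}x_j\ge\frac{x_i+(n-2)}{n-1}.
\]

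The remaining step is a short algebraic estimate. From the inequality just obtained, and since the denominator is positive,
\[
\frac{x_i}{\max_{j\ne i}x_j}\le\frac{(n-1)\,x_i}{x_i+(n-2)}=(n-1)\cdot\frac{x_i}{x_i+(n-2)}\le n-1,
\]
where the last inequality uses $x_i+(n-2)\ge x_i$, valid because $n\ge2$. Squaring gives $\min_{j\ne i}x_i^2/x_j^2\le(n-1)^2$, which is the claim. I expect essentially no serious obstacle here: the only point requiring care is the opening reduction via Jacobi's identity, since recognizing that the ratio telescopes cleanly to $a_{ii}b_{ii}/a_{jj}b_{jj}$ is what makes Fiedler's inequality directly applicable. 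Once the problem is phrased in terms of the $x_k$, the inequality~\eqref{Fied} does all the work and the subsequent manipulation is routine. (One could also record that $x_k\ge1$ by Hadamard--Fischer, but this is not needed for the bound.)
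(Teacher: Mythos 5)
Your proof is correct and follows the paper's own argument exactly: the identity $b_{ii}=\frac{\{i\}^c}{N}(A)$ reduces the ratio to $a_{ii}b_{ii}/(a_{jj}b_{jj})$, and then one subtracts a copy of $\sqrt{a_{ii}b_{ii}}$ from each side of inequality~\eqref{Fied}, replaces the remaining sum by $(n-1)$ times its maximum, and drops the $(n-2)$ term before squaring. No discrepancies to report.
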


\begin{exam}
\label{ex2}
In~\cite{jb} it is shown that $\cA_4 \ne \cE_4$ using a permutation of
\[
  R_1
  =\frac{\set{124}\set{134}\set{23}\set{1}\set{4}}
          {\set{12}\set{13}\set{14}\set{24}\set{34}}. \]
Observe that this ratio is
unchanged when indices $2$ and $3$ are interchanged;
we may thus assume without loss of generality that
$\frac{\set{1}\set{23}}{\set{2}\set{13}}(A) \le \frac{\set{1}\set{23}}{\set{3}\set{12}}(A)$.
By applying Corollary~\ref{cor:Fied}
to $A[\set{123}]$ with $i=1$ we then obtain,
factoring out a pair of \Kot\ ratios,
\[
  R_1 =
    \frac{\set{124}\set{2}}{\set{12}\set{24}} \cdot
    \frac{\set{134}\set{4}}{\set{14}\set{34}} \cdot
    \frac{\set{1}\set{23}}{\set{2}\set{13}}
  \le 4,
\]
which shows that $R_1$ belongs to $\cB_4$.
\end{exam}

\section{The extreme rays of $\log(\cB_4)$}
Let $\mu_1, \ldots, \mu_{23}$ represent the distinct nullity types
of column permutations of the following seven matrices:
$M^{\emptyset}$, $M^{\{1\}}$,
$M^{\{12\}}$,
$M_{\{123\}}$,
$M_{\{1234\}}$,
\[
  M_6 =
  \left[
    \begin{array}{cccc}
    1 & 0 & 1 & 1 \\
    0 & 1 & 1 & 1
    \end{array}
  \right],
\mbox{ and }
  M_7 =
  \left[
    \begin{array}{cccc}
     1 & 1 & 1 & 1 \\
     0 & 1 & 2 & 3
    \end{array}
  \right].
\]

The intersection of $\cH_4$ with the $23$ half-spaces
$ \{x \in \RR^{16} :\ x^T\mu_i \ge 0\}$
contains $\cD_4$ and thus $\cB_4$.
There are various software packages available for computing
the extreme rays of an intersection of half-spaces in exact
arithmetic; in this case the calculation was done with
custom-written software, whose results were later verified
with the program {\tt lrs} \cite{lrs}
(available at \url{http://cgm.cs.mcgill.ca/~avis/C/lrs.html}).
The $46$ extreme rays
include the $24$ extreme rays of $\log(\cK_4)$
(those with $|S| = |T| = |S \cap T| + 1$) and
the $6$ permutations
of $\log(R_1)$ from Example~\ref{ex2}.
The ratios corresponding to the remaining $16$ extreme rays are
permutations of
\begin{equation}\label{r2}
R_2 =
\frac
   {\set{1234}\set{1234}\set{23}\set{24}\set{34}
    \set{1}\,\emptyset\ }
   {\set{123}\set{124}\set{134}\set{234}
    \set{2}\set{3}\set{4}},
\end{equation}
\begin{equation}\label{r3}
R_3 =
\frac
   {\set{1234}\set{23}\set{24}\set{34}
    \set{1}\set{1}\,\emptyset\ }
   {\set{123}\set{12}\set{13}\set{14}
    \set{2}\set{3}\set{4}},
\end{equation}
and their complements.  Since $R_2$ and $R_3$
are both symmetric in indices $2$ and $3$,
we may make the same assumption as in
Example~\ref{ex2} and conclude
from Corollary~\ref{cor:Fied} that
\[
      R_2 =
         \frac{\set{1234}\set{24}} {\set{124}\set{234}} \cdot
         \frac{\set{1234}\set{13}} {\set{123}\set{134}} \cdot
         \frac{\set{34}\,\emptyset\ } {\set{3}\set{4}} \cdot
            \frac{\set{1}\set{23}}{\set{2}\set{13}} \le 4
\]
and
\[
    R_3 =
         \frac{\set{1234}\set{24}} {\set{124}\set{234}}\cdot
         \frac{\set{124}\set{1}} {\set{12}\set{14}}\cdot
         \frac{\set{34}\,\emptyset\ } {\set{3}\set{4}}\cdot
         \frac{\set{1}\set{23}}{\set{2}\set{13}} \le 4.
\]
Although the proven bound is $4$,
experimental evidence suggests that the ratios $R_2$ and $R_3$
are absolutely bounded,
which would imply $\cK_4 \ne \cA_4$,
and that the correct upper bound for $R_1$ is $\frac{27}{16}$,
which value it is known to approach~\cite{hm}.
This is the first known case
showing $\cA_n \ne \cB_n$ for any $n$.
It has been shown \cite{fhj} that
bounded ratios
on M-matrices or inverse M-matrices
are absolutely bounded, and
preliminary work shows the
same for (invertible) totally nonnegative matrices as far as $n<6$.

The listed nullity types are thus sufficient to
define $\cD_4$, and indeed $\cB_4$.

\begin{thm}
The semigroup $\cB_4$
is generated, up to permutation and complementation,
by positive powers of $\frac{\set{12}\,\emptyset\ }{\set{1}\set{2}}$,
$\frac{\set{123}\set{1}}{\set{12}\set{13}}$, $R_1$,
$R_2$, and $R_3$.
\end{thm}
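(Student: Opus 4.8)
The plan is to realize $\cB_4$ by a squeeze between two descriptions of the same polyhedral cone. Let $\cD_4'$ denote the intersection of $\cH_4$ with the $23$ half-spaces $\set{x : x^T\mu_i \ge 0}$ determined by the nullity types $\mu_1,\dots,\mu_{23}$. Since the $\mu_i$ form a subset of all nullity types on $4$ columns, imposing only them is a relaxation of the constraints defining $\cD_4$, so $\cD_4 \subs \cD_4'$; and by the corollary to Theorem~\ref{thm:nec} giving $\cB_n \subs \cD_n$ we already have $\cB_4 \subs \cD_4$. The strategy is to establish the reverse containment $\cD_4' \subs \cB_4$ by verifying that every extreme ray of the polyhedral cone $\log(\cD_4')$ is the formal logarithm of a bounded ratio. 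Granting this, each point of $\log(\cD_4')$ is a nonnegative combination of extreme rays, hence each ratio in $\cD_4'$ is a product of positive powers of bounded ratios and is therefore bounded; the chain $\cD_4' \subs \cB_4 \subs \cD_4 \subs \cD_4'$ then collapses to $\cB_4 = \cD_4 = \cD_4'$.

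The core step is thus the boundedness of the $46$ extreme rays. I would dispose of these by exploiting the symmetry of $\cB_4$ under the group $G$ generated by index permutations and set complementation: permuting indices carries a PD matrix to a PD matrix and merely relabels principal minors, while complementation preserves boundedness via Jacobi's identity, as noted after Example~\ref{ex1}. It therefore suffices to bound one representative from each $G$-orbit. The $24$ \Kot\ extreme rays (those with $|S|=|T|=|S\cap T|+1$) lie in $\cK_4 \subs \cB_4$ and fall into two orbits represented by $\frac{\set{12}\,\emptyset}{\set{1}\set{2}}$ and $\frac{\set{123}\set{1}}{\set{12}\set{13}}$; the remaining $22$ rays form the $G$-orbits of $R_1$, $R_2$, and $R_3$, each of which was shown bounded (indeed $\le 4$) in Example~\ref{ex2} and in the displayed applications of Corollary~\ref{cor:Fied} preceding this theorem. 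With all orbit representatives bounded, the squeeze closes.

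To extract the stated generating set I would then invoke Minkowski--Weyl: the polyhedral cone $\log(\cB_4)=\log(\cD_4')$ is generated by its finitely many extreme rays, so as a multiplicative semigroup $\cB_4$ is generated by positive powers of the $46$ extreme-ray ratios. Grouping these into the five $G$-orbits identified above yields exactly the claimed list, up to permutation and complementation.

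I expect two points to carry the real weight. The first is purely computational but essential: one must be certain that the listed $23$ nullity types already cut out the correct cone and that the extreme-ray enumeration is complete and exact, since a missing facet could spawn an unlisted extreme ray or a spurious generator (this is precisely what the independent verification with \texttt{lrs} guards against). The second, and the genuinely non-formal input, is the boundedness of $R_1$, $R_2$, and $R_3$: these leave the cone-theoretic necessary-condition framework entirely and require the sufficiency estimate of Corollary~\ref{cor:Fied}, which is the only place an honest analytic inequality (Fiedler's) enters. The surrounding orbit bookkeeping—checking that the $24$ \Kot\ rays split as $12+12$ under $G$ and that $R_2,R_3$ account for the final $16$—is routine but must be tallied carefully to confirm that five representatives, and no fewer, suffice.
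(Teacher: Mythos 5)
Your proposal reconstructs exactly the paper's argument: the $23$ listed nullity types cut out a polyhedral cone containing $\cB_4$, its $46$ extreme rays are enumerated by exact computation (verified with {\tt lrs}), and each ray is shown bounded---the \Kot\ rays via $\cK_4 \subs \cB_4$ and the orbits of $R_1$, $R_2$, $R_3$ via Corollary~\ref{cor:Fied}---so the containments collapse and the orbit representatives generate. This matches the paper's route in both structure and the key analytic input, so there is nothing further to compare.
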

\begin{thm}
The sets $\cB_4$ and $\cD_4$ are equal: a ratio $\alphabet \in \cH_4$
is bounded if and only if
$\log(\alphabet)^T\mu_i \ge 0$
for all $i = 1, \ldots, 23$, with $\mu_i$ as listed above.
\end{thm}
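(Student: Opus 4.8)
The plan is to sandwich $\cD_4$ between $\cB_4$ and the explicitly computed cone, and then close the loop by convexity. Write $C$ for the set of homogeneous ratios $\alphabet \in \cH_4$ satisfying $\log(\alphabet)^T\mu_i \ge 0$ for $i = 1, \ldots, 23$. Each $\mu_i$ is itself a nullity type of a matrix with four columns, so the defining inequalities of $C$ form a subset of those defining $\cD_4$; imposing fewer constraints enlarges the cone, whence $\cD_4 \subs C$. Combined with $\cB_4 \subs \cD_4$ from the Corollary to Theorem~\ref{thm:nec}, this produces the chain $\cB_4 \subs \cD_4 \subs C$, and it remains only to prove the reverse containment $C \subs \cB_4$.

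For this reverse containment I would pass to formal logarithms and invoke the extreme-ray enumeration recorded above. The cone $\log(C)$ is pointed: the listed $\mu_i$ include all column permutations of $\nt(M^{\emptyset})$, $\nt(M^{\{1\}})$, and $\nt(M^{\{12\}})$, hence every $\nt(M^S)$ with $|S| \le n-2 = 2$, and by the argument showing that $\log(\cB_n)$ is pointed these vectors together with those orthogonal to $\log(\cH_4)$ span $\RR^{16}$. A pointed polyhedral cone is the conical hull of its finitely many extreme rays, so every element of $\log(C)$ is a finite nonnegative combination of the $46$ extreme rays.

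It then suffices to place each of those $46$ rays inside $\log(\cB_4)$. The $24$ rays with $|S| = |T| = |S \cap T| + 1$ are formal logarithms of \Kot\ ratios and lie in $\cK_4 \subs \cB_4$. The $6$ permutations of $\log(R_1)$ are bounded by Example~\ref{ex2}, and the remaining $16$ rays are permutations of $R_2$, $R_3$, and their complements, each bounded by $4$ via the two displayed computations; boundedness passes to permutations and complements because $\log(\cB_4)$ is invariant under both actions. Thus all $46$ generators belong to $\log(\cB_4)$.

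To finish, observe that $\log(\cB_4)$ is a convex cone: a product of bounded ratios is bounded and a positive power of a bounded ratio is bounded, so the set is closed under sums and nonnegative scalings, and in particular contains every finite nonnegative combination of its members. Applied to the $46$ extreme rays this yields $\log(C) \subs \log(\cB_4)$, i.e.\ $C \subs \cB_4$, closing the chain and forcing $\cB_4 = \cD_4 = C$; in particular the $23$ listed nullity types already determine $\cD_4$. The only genuinely unavoidable ingredient is the exact extreme-ray computation underlying the enumeration (carried out in exact arithmetic and cross-checked with \texttt{lrs}); everything else is the bookkeeping of inclusions together with the pointedness and convexity facts, so the principal obstacle is establishing---by machine---that the $46$ listed rays are precisely the extreme rays of $\log(C)$.
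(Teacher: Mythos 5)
Your proposal is correct and follows essentially the same route as the paper: the paper likewise sandwiches $\cB_4 \subs \cD_4$ inside the explicitly computed $23$-inequality cone, enumerates its $46$ extreme rays by machine, verifies each is bounded (Koteljanskii rays, the permutations of $R_1$ via Example~\ref{ex2}, and $R_2$, $R_3$ with their permutations and complements via Corollary~\ref{cor:Fied}), and closes the loop using the Corollary to Theorem~\ref{thm:nec}. Your explicit treatment of pointedness and of the semigroup/cone correspondence only spells out steps the paper leaves implicit.
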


We end the section with an example showing that $\cD_4 \ne \cE_4$.
The ratio
\[
\alphabet=
\frac{\set{1234}\set{134}\set{12}\set{14}\set{23}\set{24}\set{3}\,\emptyset\ }
{\set{123}\set{124}\set{234}\set{13}\set{34}\set{1}\set{2}\set{4}}
\]
satisfies ST1 and ST2 with respect to every $S \subs \{1,2,3,4\}$.
However, the inner product $\log(\alphabet)^T \nt(M_6)$ gives $-1$, and
$\alphabet$ is not bounded.

\section{The case $n = 5$}
A ratio in the semigroup $\cD_n$ is bounded
near $\eps = 0$ for any
matrix family of the form
$\left(B+\eps C\right)^*\left(B+\eps C\right)$,
where $B$ is singular
and $C$ is generic in the sense that
$C x = B y$ and $B x =  0$
imply $ x =  0$.
We have shown that for $n=4$ such matrix families,
whose behavior depends only on the nullity type of $B$,
are sufficient to determine $\cB_4$.
The following example shows that for $n \ge 5$,
more general
matrix families must be considered.
\begin{exam}
\label{ex3}
Let $n=5$ and consider the ratio
\[
  Q =
  \frac{\set{12345}\set{1345}\set{2345}\set{123}\set{125}\set{34}\set{45}\set{1}\set{2}\,\emptyset\ }
  {\set{1234}\set{1235}\set{145}\set{245}\set{345}\set{12}\set{13}\set{23}\set{4}\set{5}}.
\]
We claim that $Q \in \cD_5$.
Since $Q$ is self-complementary, it is only necessary to
check against matrices of rank up to $\lfloor \frac52 \rfloor$,
i.e.\ $M$ of size $2 \times 5$.
Uniformly deleting a specified index element in the sets of $Q$ yields a product
of \Kot\ ratios in all five cases, which eliminates the
case in which $M$ has a zero column (using homogeneity).
The nullity type of $M$ now depends only on a partition
of the columns $\{1,2,3,4,5\}$ into parallel classes.
Partitions into exactly two blocks give the nullity
type of a direct sum and are thus redundant,
leaving $37$ cases to check.

Now let
\[
  P =
  \left[
    \begin{array}{ccccc}
      1  & 1  & 1  & 1  & 1   \\
      0  &\eps& 0  & 1  & 2   \\
      0  & 0  &\eps& 1  & 2   \\
      0  & 0  & 0  &\eps& 0   \\
      0  & 0  & 0  & 0  &\eps \\
    \end{array}
  \right],
\]
define $A_\eps$ as $P^*P$,
and consider the polynomial $\det A_\eps[ S ]$ for some $S \subs N$.
This is a nonnegative function of $\eps$, and so for every $S$
the dominating term for $\eps$ small must
be an even power $C_S\eps^{2d_S}$ for $C_S > 0$ and $d_S$ a nonnegative integer.
Call the vector $\left[d_S\right] \in \RRR$
the {\em asymptotic nullity type} of $P$,
or $\asn(P)$.
The fact that $\log(Q)^T \asn(P) = -1$
means that the ratio $Q$ is not bounded, and $\cB_5 \ne \cD_5$.
\end{exam}

We expect that any one-parameter matrix
family making a particular ratio unbounded
can be adequately approximated by
polynomials in $\eps$,
and further that the maximum necessary degree of
the approximation depends only on $n$.
We thus expect $\log(B_n)$ to be
a polyhedral cone for all $n$,
as it is for $n \le 4$.

\begin{conj}
Given any $n$,
define the asymptotic nullity type $\asn(P)$ of an
invertible $n \times n$ matrix $P$ of polynomials
in $\eps$
  as in Example~\ref{ex3}.
Then there exists a finite list of polynomial matrices
$P_1$, \ldots, $P_\ell$
such that a homogeneous ratio $\alphabet$
is bounded
if and only if
$\log(\alphabet)^T \asn(P_i) \ge 0$
for all $1 \le i \le \ell$.
\end{conj}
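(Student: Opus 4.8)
The plan is to prove the two implications separately, noting that only one of them carries content. The forward implication---boundedness implies $\log(\alphabet)^T\asn(P_i)\ge 0$---holds for \emph{any} list of polynomial matrices, by the computation of Example~\ref{ex3}: for an invertible polynomial matrix $P$ the principal minor $\det(P^*P)[S]=\sum_{|R|=|S|}(\det P[R,S])^2$ is a sum of squares of polynomials, so its lowest-order term is $C_S\eps^{2d_S}$ with $C_S>0$ and $d_S=\asn(P)_S=\min_{|R|=|S|}\operatorname{val}_\eps\det P[R,S]$, whence $\alphabet(P^*P)\sim C\,\eps^{2\log(\alphabet)^T\asn(P)}$ as $\eps\to0^+$; a negative exponent forces unboundedness exactly as in Theorem~\ref{thm:nec}. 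Thus the real task is to produce a \emph{finite} list whose inequalities are also \emph{sufficient}, and I would split this into a sufficiency statement for the infinite family of all polynomial matrices, followed by a finiteness reduction.

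For the reformulation, consider the map sending a positive definite $A$ to $\big(\log\det A[S]\big)_{S\subs N}\in\RRR$, and let $\mathcal{F}$ be its image. A homogeneous ratio is bounded precisely when the linear functional $\log(\alphabet)$ is bounded above on $\mathcal{F}$, so $\log(\cB_n)$ is the barrier cone of $\mathcal{F}$, that is, the polar of the recession cone of $\overline{\mathrm{conv}}\,\mathcal{F}$ intersected with $\log(\cH_n)$. Degenerating $A(\eps)=P(\eps)^*P(\eps)$ sends $\log\det A(\eps)[S]\sim 2d_S\log\eps$, so each polynomial family contributes the recession ray $\RR_{\ge0}\cdot(-\asn(P))$; sufficiency is the assertion that $\mathcal{F}$ admits no further recession directions, i.e. that the asymptotic nullity types generate the entire recession cone.

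To establish sufficiency I would apply real curve selection at infinity directly to an unboundedness witness. If $\alphabet$ is unbounded then $\log(\alphabet)$ is unbounded above on $\mathcal{F}$; working with the minors $\det A[S]$ themselves (these are semialgebraic, unlike their logarithms), I would select a real Puiseux curve $A(\eps)$ of positive definite matrices along which $\alphabet(A(\eps))\to\infty$ as $\eps\to0^+$. Writing $A(\eps)=P(\eps)^*P(\eps)$ over the real Puiseux field, reparametrizing $\eps$ to make exponents integral, and clearing denominators column by column (harmless since $\alphabet$ is homogeneous and the resulting shift of $\asn$ lies in the lineality orthogonal to $\log(\cH_n)$), I obtain a genuine polynomial matrix $P$ whose minor valuations are $\asn(P)$ with $\log(\alphabet)^T\asn(P)<0$. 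The two points needing care are keeping $A(\eps)$ positive definite while taking a Puiseux Cholesky factor, and ensuring the passage to a polynomial $P$ does not alter the leading valuations; the identity $d_S=\min_{|R|=|S|}\operatorname{val}_\eps\det P[R,S]$, which depends only on the valuated matroid of $P$, makes the second point tractable.

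The finiteness step, and the main obstacle, is a degree bound. For each $D$ let $\mathrm{ASN}_D$ be the set of $\asn(P)$ over real polynomial matrices of degree at most $D$; since $d_S\le|S|\,D$, every coordinate is a bounded nonnegative integer, so $\mathrm{ASN}_D$ is \emph{finite}. The cones $\{x\in\log(\cH_n):x^Tv\ge0\ \forall v\in\mathrm{ASN}_D\}$ are therefore polyhedral, decrease with $D$, and by the previous step have intersection equal to $\log(\cB_n)$. The conjecture is exactly the claim that this nested intersection \emph{stabilizes} at some $D=D(n)$, after which one takes one polynomial matrix $P_i$ realizing each facet-defining element of $\mathrm{ASN}_{D(n)}$. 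I expect stabilization to be the hard part, since a decreasing chain of polyhedral cones with polyhedral intersection need not terminate. The natural route is through the tropical geometry of the Grassmannian: $\asn(P)$ is the valuation vector of the valuated matroid of $P$, the Dressian and tropical Grassmannian on $n$ elements are finite rational polyhedral fans, and their rays are realized with bounded valuations, hence by bounded-degree $P$. The genuine difficulties are that one must restrict to matroids realizable over the real Puiseux field, that Example~\ref{ex3} already shows special (non-generic) positive-degree families are needed at $n=5$ so the bound $D(n)$ must grow, and that matching the recession cone of $\overline{\mathrm{conv}}\,\mathcal{F}$ exactly---rather than some larger tropical object---requires controlling precisely which valuated matroids arise from positive definite degenerations.
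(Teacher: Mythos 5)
You should first be aware that the statement you are addressing is presented in the paper as an open conjecture: the authors give no proof, only the supporting evidence of Theorem~\ref{thm:nec} (which handles the degree-zero families $M^*M+\eps I$) and the single higher-degree witness of Example~\ref{ex3} showing that such polynomial families are genuinely needed for $n\ge 5$. There is therefore no argument in the paper to compare yours against, and your proposal has to stand on its own as a proof. It does not. The necessity direction is fine --- your Cauchy--Binet computation $\det\bigl((P^*P)[S]\bigr)=\sum_{|R|=|S|}\bigl(\det P[R,S]\bigr)^2$ correctly identifies $d_S$ as the minimal valuation of a maximal minor and reproduces the asymptotic bookkeeping of Theorem~\ref{thm:nec} --- but, as you note yourself, it holds for \emph{any} list of polynomial matrices and so carries no content toward the conjecture.

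The substance lies in the two steps you outline and do not carry out. For sufficiency, the curve-selection argument needs (i) a semialgebraic formulation of unboundedness, which is only available for rational exponent vectors, so you must first reduce to extreme rays of $\log(\cB_n)$ and justify that reduction; (ii) a Gram or Cholesky factorization over the real closed field of Puiseux series whose specializations remain positive definite; and (iii) a verification that reparametrizing and clearing denominators moves $\asn$ only by vectors orthogonal to $\log(\cH_n)$. Each of these is plausible but none is proved. More importantly, the finiteness step --- that the decreasing chain of polyhedral cones cut out by asymptotic nullity types of degree at most $D$ stabilizes at some $D(n)$ --- is exactly the content of the conjecture, and you explicitly leave it open, observing correctly that a decreasing chain of polyhedral cones with polyhedral intersection need not terminate. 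The tropical-Grassmannian heuristic constrains which valuated matroids can occur, but does not by itself show that finitely many of the resulting linear functionals determine the recession cone of the image of the log-minor map, nor does it address realizability over the reals with positive-definite degenerations. What you have written is a sensible research program, consistent with the heuristic paragraph the authors place just before the conjecture, but it is not a proof, and the conjecture remains open.
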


\bibliographystyle{amsplain}
\bibliography{hj4}

\end{document}